\newtheorem{theorem}{Theorem}[section]
\newtheorem{lemma}{Lemma}[section]
\newtheorem{definition}{Definition}[section]
\newtheorem{remark}{Remark}[section]
\newcommand{\beq}{\begin{equation}}
\newcommand{\eeq}{\end{equation}}
\newcommand{\beqn}{\begin{eqnarray}}
\newcommand{\eeqn}{\end{eqnarray}}
\begin{document}
\title{New Quantitative
Deformation Lemma and New Mountain Pass Theorem}\author {Liang
Ding$^{1,\,3}$\thanks{Email: lovemathlovemath@126.com(Liang Ding),
 },
Fode Zhang$^{2}$\thanks
 {Email: lnsz-zfd@163.com(Fode Zhang)},\,\,and\, Shiqing Zhang$^{1}$\thanks{Corresponding author's email: zhangshiqing@scu.edu.cn(Shiqing Zhang),},\\
$^{1}$Department of Mathematics and Yangtze Center of Mathematics,\\
Sichuan University\\ Chengdu 610064, People's Republic of
China\\
 $^{2}$Department of Mathematics\\ Kunming University of Science and
Technology\\ Yunnan 650093,
 People's Republic of China\\
 $^{3}$Department of Basis Education\\
 Dehong Vocational College\\
 Mangshi, Yunnan, 678400\\
 People's Republic of China\\
}\date{} \maketitle{}   \vskip-9mm\noindent
 {\bf Abstract} In this paper, we obtain a new quantitative deformation Lemma so that we can
obtain more critical points, especially for supinf critical value $c_1$,
$x=\varphi^{-1}(c_1)$ is a new critical point. For $infmax$ critical value $c_2$, we can obtain two new critical points $x = 0$ (valley point) and $x = e$(peak point) ,comparing with Willem's variant of the mountain pass theorem     of Ambrosetti-Rabinowitz,in which  $\varphi(e)\leq\varphi(0)<c_2$, but in our new mountain pass theorem,
$ \varphi(e)=c_2$. \\
{\bf Key words} Critical Points; Quantitative
Deformation Lemma; Mountain Pass Lemma\\
 {\bf 2010 MR Subject Classification} 47H10, 47J30, 39A10.

\section{Introduction}

\setcounter{equation}{0}
 \indent \allowdisplaybreaks

 In 1973, Ambrosetti and Rabinowitz \cite{1} presented the famous Mountain Pass Theorem.
 Later, there were many variants and generalizations([2]-[14]).
 Specially, Willem [11] gave the Quantitative Deformation Lemma and
  the corresponding mountain pass theorem. It is well known that quantitative deformation lemma is
 a very powerful tool to obtain mountain pass theorem, and the
 mountain pass theorem has proved to be a power tool in many areas
 of analysis. But to our best knowledge, very few works have been
 done for quantitative deformation lemma or mountain pass theorem in
 the past thirty years.

In this paper, we extend the quantitative deformation lemma in [11]
 so that we can obtain more critical points, especially for supinf
critical value $c_1$, $x=\varphi^{-1}(c_1)$ is a new critical point.
Moreover,as an application of our deformation lemma,a new mountain
pass theorem is given. Comparing with the mountain pass type theorem in
[11], $ \varphi(e)\leq\varphi(0)<c_2$, but in our new mountain pass
theorem, $ \varphi(e)=c_2$, so that our new mountain pass theorem can not
be obtained by the quantitative deformation lemma in [11];besides,
in our theorem, if $\varphi$ satisfies $(PS)_{c_2}$
condition, we can obtain two new critical points $x = 0$ (valley
point) and $x = e$ (peak point).

The organization of  this paper is as following. In
section $2$, the quantitative deformation lemma in [11] and the corresponding
mountain pass theorem in [11] are given. In section $3$, on the
basis of the quantitative deformation lemma in [11], we prove
the new quantitative deformation lemma. In section $4$, as an application of our deformation lemma,
our new mountain pass theorem is given.

\section{Preliminaries}
\setcounter{equation}{0}
 \indent

 For convenience, we introduce the Quantitative Deformation Lemma in [11] and
 the corresponding Mountain Pass Type Theorem in [11] as the following:
  \begin{lemma}(Quantitative deformation lemma)
 Let $X$ be a Hilbert space, $\varphi\in C^{2}(X, \mathbb{R})$, $c\in \mathbb{R}$, $\varepsilon>0$. Assume that
  \[
  \big(\forall u\in\varphi^{-1}([c-2\varepsilon,
  c+2\varepsilon])\big):\|\varphi^{\prime}(u)\|\geq2\varepsilon.
  \]
  Then there exists $\eta\in$ $C(X, X)$, such that
  \begin{itemize}
  \item[$(a)$] $\eta(u)=u$, $\forall u\notin \varphi^{-1}\big([c-2\varepsilon,
  c+2\varepsilon]\big)$.
 \item[$(b)$]
 $\eta(\varphi^{c+\varepsilon})\subset\varphi^{c-\varepsilon}$,
 where $\varphi^{c-\varepsilon}:=\varphi^{-1}\big((-\infty, c-\varepsilon]\big)$.
\end{itemize}
  \end{lemma}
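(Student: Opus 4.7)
The plan is to use the standard gradient-flow approach: construct a bounded, locally Lipschitz descent vector field supported in $A := \varphi^{-1}([c-2\varepsilon, c+2\varepsilon])$, integrate the associated ODE for a fixed time, and verify the two required properties. The hypothesis $\|\varphi'(u)\| \geq 2\varepsilon$ on $A$ is what makes the field bounded, and the $C^2$ regularity of $\varphi$ makes $\varphi'$ locally Lipschitz, guaranteeing unique global solutions of the flow.

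First I would set $B := \varphi^{-1}([c-\varepsilon, c+\varepsilon]) \subset A$ and define the Urysohn-type cut-off
\[
g(u) := \frac{\mathrm{dist}(u, X \setminus A)}{\mathrm{dist}(u, X \setminus A) + \mathrm{dist}(u, B)},
\]
which is Lipschitz on $X$, equal to $1$ on $B$, equal to $0$ on $X \setminus A$, and takes values in $[0,1]$. Using this, I would introduce the pseudo-gradient vector field
\[
f(u) := \begin{cases} -\, g(u)\, \dfrac{\varphi'(u)}{\|\varphi'(u)\|^{2}}, & u \in A,\\[4pt] 0, & u \notin A. \end{cases}
\]
Because $\|\varphi'(u)\| \geq 2\varepsilon$ on $A$, we have $\|f(u)\| \leq 1/(2\varepsilon)$ everywhere; because $\varphi \in C^{2}$, the map $u \mapsto \varphi'(u)/\|\varphi'(u)\|^{2}$ is locally Lipschitz on the open set $\{u : \varphi'(u) \neq 0\} \supset A$, and $f$ vanishes on the open set $X\setminus\overline{B'}$ for a suitable intermediate set, so $f$ is locally Lipschitz on $X$ (with care at the boundary of $A$, where $g$ vanishes Lipschitz-continuously).

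Next I would consider the Cauchy problem $\frac{d\sigma}{dt} = f(\sigma(t))$, $\sigma(0) = u$. Local Lipschitzness and the uniform bound on $\|f\|$ yield a unique global flow $\sigma: \mathbb{R} \times X \to X$, continuous in $(t,u)$. I would then define $\eta(u) := \sigma(2\varepsilon, u)$, which is continuous on $X$. Property $(a)$ is immediate: if $u \notin A$ then $f(u) = 0$, and by uniqueness $\sigma(t,u) \equiv u$, hence $\eta(u) = u$.

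For property $(b)$, the key identity is
\[
\frac{d}{dt}\varphi(\sigma(t,u)) = \langle \varphi'(\sigma), f(\sigma)\rangle = -\, g(\sigma(t,u)) \leq 0,
\]
so $\varphi \circ \sigma(\cdot, u)$ is non-increasing. Fix $u \in \varphi^{c+\varepsilon}$. If $\sigma(t,u) \in B$ for every $t \in [0, 2\varepsilon]$, then $g(\sigma(t,u)) = 1$ throughout, and integration gives $\varphi(\eta(u)) \leq \varphi(u) - 2\varepsilon \leq c - \varepsilon$. Otherwise there is a first time $t_{0} \in [0, 2\varepsilon)$ at which $\sigma(t_{0},u) \notin B$; since $\varphi \circ \sigma$ is non-increasing and started at $\varphi(u) \leq c + \varepsilon$, the exit must occur through the lower boundary, i.e.\ $\varphi(\sigma(t_{0},u)) < c - \varepsilon$, and monotonicity then gives $\varphi(\eta(u)) \leq \varphi(\sigma(t_{0},u)) < c - \varepsilon$. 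In either case $\eta(u) \in \varphi^{c-\varepsilon}$, which is the desired inclusion.

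The main technical obstacle I anticipate is verifying that $f$ is genuinely locally Lipschitz across the interface where $g$ transitions between its active and inactive regions; once that is in hand, the Picard--Lindel\"of theorem delivers the flow and the decay estimate above closes the argument.
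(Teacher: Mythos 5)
Your proposal is correct and is essentially the paper's (i.e.\ Willem's) own argument, which the paper cites for Lemma 2.1 and replicates as the template for its Theorem 3.1: the same distance-quotient cut-off $\psi$, the same normalized field $f(u)=-\psi(u)\|\nabla\varphi(u)\|^{-2}\nabla\varphi(u)$ bounded by $(2\varepsilon)^{-1}$, the flow for time $2\varepsilon$, and the dichotomy between staying in $B$ (so $\varphi$ drops by $2\varepsilon$) and exiting $B$ (necessarily through the lower level, by monotonicity of $\varphi\circ\sigma$). The only cosmetic slip is the phrase ``first time $t_0$ at which $\sigma(t_0,u)\notin B$'' --- the exit set is open so no first such time need exist, but any time with $\sigma(t_0,u)\notin B$ serves, and your local-Lipschitz concern at $\partial A$ is harmless since $\|\varphi'\|\geq 2\varepsilon$ on the closed set $A$ lets the field extend to a neighborhood of $A$ while $f\equiv 0$ on the open complement.
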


 \begin{theorem}(Mountain pass type theorem)
  Let $X$ be a Hilbert space, $\varphi\in C^{2}(X, \mathbb{R})$, $e\in X$ and $r>0$ be such that $\|e\|>r$ and
     \begin{eqnarray}\label{2.1}
      b:=\inf_{\|u\|=r}\varphi(u)>\varphi(0)\geq\varphi(e).
     \end{eqnarray}
     Then, for each $\varepsilon>0$, there exists $u\in X$ such that
     \begin{itemize}
      \item[$(i)$] $c-2\varepsilon\leq\varphi(u)\leq c+2\varepsilon$,
      \item[$(ii)$] $\|\varphi^{\prime}(u)\|<2\varepsilon$,
       \end{itemize}
       where
        \[
               c:=\inf_{\gamma\in\Gamma}\max_{t\in[0, 1]}\varphi\big(\gamma(t)\big)
        \]
        and
         \[
          \Gamma:=\{\gamma\in C\big([0, 1], X\big):\gamma(0)=0,
          \gamma(1)=e\}.
         \]
\end{theorem}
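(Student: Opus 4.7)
The plan is to argue by contradiction using the quantitative deformation lemma. First I would establish the inequality $c \geq b$: every $\gamma \in \Gamma$ joins $0$ to $e$ and, since $\|e\| > r$, the continuous map $t \mapsto \|\gamma(t)\|$ must take the value $r$ at some $t^{*} \in (0,1)$ by the intermediate value theorem, giving $\max_{t} \varphi(\gamma(t)) \geq \varphi(\gamma(t^{*})) \geq b$; passing to the infimum over $\Gamma$ yields $c \geq b > \varphi(0) \geq \varphi(e)$, which in particular gives the strict gap $c > \varphi(0)$.

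Next, I would observe that both conditions (i) and (ii) only become laxer as $\varepsilon$ grows, so any $u$ working for some small $\varepsilon_{0}$ also works for every $\varepsilon \geq \varepsilon_{0}$; hence it suffices to prove the theorem for $\varepsilon \in \bigl(0, (c - \varphi(0))/2\bigr)$. Fix such an $\varepsilon$ and suppose for contradiction that no $u \in X$ satisfies (i) and (ii) simultaneously. Then every $u \in \varphi^{-1}\bigl([c - 2\varepsilon, c + 2\varepsilon]\bigr)$ obeys $\|\varphi^{\prime}(u)\| \geq 2\varepsilon$, which is exactly the hypothesis of Lemma 2.1. Invoking that lemma produces $\eta \in C(X, X)$ satisfying conclusions (a) and (b).

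The contradiction comes from deforming a near-minimax path. By definition of $c$ as an infimum, choose $\gamma \in \Gamma$ with $\max_{t} \varphi(\gamma(t)) < c + \varepsilon$, so that $\gamma([0,1]) \subset \varphi^{c + \varepsilon}$. Because $\varepsilon < (c - \varphi(0))/2$, both $\varphi(0)$ and $\varphi(e)$ lie strictly below $c - 2\varepsilon$, so the endpoints $0$ and $e$ lie outside $\varphi^{-1}([c - 2\varepsilon, c + 2\varepsilon])$; property (a) then gives $\eta(0) = 0$ and $\eta(e) = e$, so $\tilde{\gamma} := \eta \circ \gamma$ still belongs to $\Gamma$. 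Property (b), however, forces $\varphi(\tilde{\gamma}(t)) \leq c - \varepsilon$ for every $t \in [0,1]$, and so $\max_{t} \varphi(\tilde{\gamma}(t)) \leq c - \varepsilon < c$, contradicting the definition of $c$.

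The one delicate point is verifying that the endpoints of $\gamma$ are genuinely fixed by $\eta$: this is exactly where the strict inequality $c > \varphi(0) \geq \varphi(e)$ from the first step and the smallness of $\varepsilon$ arranged in the second step are used, since without these the deformed path $\eta \circ \gamma$ might fail to satisfy $\tilde{\gamma}(0)=0$, $\tilde{\gamma}(1)=e$ and hence might not lie in $\Gamma$. Once this bookkeeping is in place, the rest of the argument is a direct contradiction against the infimum definition of $c$.
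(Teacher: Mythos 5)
Your proof is correct and is essentially the same argument the paper relies on: Theorem 2.1 is quoted from Willem's \emph{Minimax Theorems} without a reproduced proof, and the standard proof there is exactly your scheme --- establish $c\geq b>\varphi(0)\geq\varphi(e)$ via the intermediate value theorem, reduce to $\varepsilon<\bigl(c-\varphi(0)\bigr)/2$, negate the conclusion to activate the hypothesis of Lemma 2.1, and deform a path with $\max_{t}\varphi(\gamma(t))<c+\varepsilon$ into one with maximum at most $c-\varepsilon$, using property $(a)$ and the strict gap to keep the endpoints $0$ and $e$ fixed so that $\eta\circ\gamma\in\Gamma$, contradicting the infimum. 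This is also the same contradiction template the paper itself follows in Section 4 for its new mountain pass theorem, and your bookkeeping on the monotonicity of conditions $(i)$--$(ii)$ in $\varepsilon$ and on the endpoint condition $\varphi(e)\leq\varphi(0)<c-2\varepsilon$ is exactly the delicate part done right.
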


\begin{definition}
([14])Let $X$ be a Banach space, $\varphi \in C^{1}(X, \mathbb{R})$ and
$c\in \mathbb{R}$. The function $\varphi$ satisfies the $(PS)_{c}$
condition if any sequence $(u_{n}) \subset X$ such that
\[\varphi(u_{n})\rightarrow c,
\varphi^{\prime}(u_{n})\rightarrow 0\]
 has a convergent subsequence.
\end{definition}
\section{New Quantitative Deformation Lemma}
\setcounter{equation}{0}
 \indent

\begin{theorem}\label{3.1} Let $X$ be a Hilbert space, $\varphi\in C^{2}(X, \mathbb{R})$, $c\in \mathbb{R}$,
$\varepsilon>0$. Assume that
\begin{eqnarray*}
\big(\forall u\in\varphi^{-1}([c-2\varepsilon,
c+2\varepsilon])\big):\|\varphi^{\prime}(u)\|\geq2\varepsilon.
\end{eqnarray*}
Then there exists $\eta\in$ $C(X, X)$, such that
\begin{itemize}
\item[$(a\,^{\prime})$] $\eta(u)=u$, $\forall u\notin \varphi^{-1}\big([c-2\varepsilon,
c+2\varepsilon]\big)\backslash D$, where
$D\subseteq\varphi^{-1}\big([c-0.5\varepsilon, c+\varepsilon]\big)$.
\item[$(b\,^{\prime})$] $\eta\big(\varphi^{-1}[c-\varepsilon, c-0.6\varepsilon]\big)\subset\varphi_{*}^{c+\varepsilon}$, where
$\varphi_{*}^{c+\varepsilon}$ denotes
$\varphi^{-1}\big([c+\varepsilon, +\infty)\big)$.
\item[$(c\,^{\prime})$] $\eta\big(\varphi^{-1}[c+0.6\varepsilon, c+\varepsilon]\big)\subset\varphi^{c-\varepsilon}$, where
$\varphi^{c-\varepsilon}$ denotes $\varphi^{-1}\big((-\infty,
c-\varepsilon]\big)$.
\end{itemize}
\end{theorem}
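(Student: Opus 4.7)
The plan is to construct $\eta$ as the time-$T$ map of a modified pseudo-gradient flow whose overall direction depends on the \emph{initial} value $\varphi(u)$. Because the sign of the flow is determined by $u$ rather than by the current point $\sigma(t)$, the same ODE can simultaneously push the lower sub-slab $\varphi^{-1}([c-\varepsilon,c-0.6\varepsilon])$ upward past $c+\varepsilon$ and the upper sub-slab $\varphi^{-1}([c+0.6\varepsilon,c+\varepsilon])$ downward past $c-\varepsilon$, while leaving a central band untouched so that the exceptional set $D$ can be placed inside $\varphi^{-1}([c-0.5\varepsilon,c+\varepsilon])$.

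Set $A:=\varphi^{-1}([c-2\varepsilon,c+2\varepsilon])$ and $B:=\varphi^{-1}([c-\varepsilon,c+\varepsilon])$. Since $\|\varphi'(u)\|\geq 2\varepsilon$ on $A$, I would select a locally Lipschitz pseudo-gradient $V$ for $\varphi$ on $\{\varphi'\neq 0\}\supset A$ with $\|V\|\leq 2\|\varphi'\|$ and $\langle\varphi',V\rangle\geq\|\varphi'\|^{2}$, together with a locally Lipschitz cut-off $\psi:X\to[0,1]$ equal to $1$ on $B$ and $0$ off $A$, and a continuous piecewise-linear direction function $f:\mathbb{R}\to[-1,1]$ equal to $+1$ on $[c-\varepsilon,c-0.6\varepsilon]$, $-1$ on $[c+0.6\varepsilon,c+\varepsilon]$, and $0$ on $[c-0.5\varepsilon,c+0.5\varepsilon]\cup(-\infty,c-2\varepsilon]\cup[c+2\varepsilon,+\infty)$, linearly interpolated elsewhere. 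For each $u\in X$ solve the Cauchy problem
\[
\dot\sigma(t)=f(\varphi(u))\,\psi(\sigma(t))\,\frac{V(\sigma(t))}{\|V(\sigma(t))\|^{2}},\qquad \sigma(0)=u,
\]
the right-hand side being extended by $0$ where $\psi=0$. This field is locally Lipschitz in $\sigma$, jointly continuous in $u$, and uniformly bounded by $1/(2\varepsilon)$ on $A$ (using $\|V\|\geq\|\varphi'\|\geq 2\varepsilon$ there), so the problem admits a unique global solution depending continuously on $u$. Set $\eta(u):=\sigma(8\varepsilon,u)$.

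Property $(a')$ is immediate: $\eta(u)=u$ whenever $f(\varphi(u))=0$, and this covers $X\setminus A$ together with $D:=\varphi^{-1}([c-0.5\varepsilon,c+0.5\varepsilon])\subseteq\varphi^{-1}([c-0.5\varepsilon,c+\varepsilon])$. For $(b')$, fix $u$ with $\varphi(u)\in[c-\varepsilon,c-0.6\varepsilon]$; then $f(\varphi(u))=1$ and the pseudo-gradient bounds give
\[
\frac{d}{dt}\varphi(\sigma(t))=\psi(\sigma)\frac{\langle\varphi'(\sigma),V(\sigma)\rangle}{\|V(\sigma)\|^{2}}\geq\frac{\psi(\sigma)}{4}\geq 0,
\]
so $\varphi(\sigma(t))$ is non-decreasing. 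If $\varphi(\sigma(t))<c+\varepsilon$ on the whole interval $[0,8\varepsilon]$, then $\sigma$ remains in $B$, so $\psi(\sigma)\equiv 1$ and integration yields $\varphi(\sigma(8\varepsilon))\geq\varphi(u)+2\varepsilon\geq c+\varepsilon$, a contradiction; hence $\varphi(\eta(u))\geq c+\varepsilon$. Property $(c')$ follows from the symmetric argument with $f(\varphi(u))=-1$. The main obstacle is calibrating the direction function $f$ so that its zero set lies inside $\varphi^{-1}([c-0.5\varepsilon,c+\varepsilon])$ while the pseudo-gradient rate $\tfrac14$ over the chosen flow time $8\varepsilon$ is still enough to deliver the required $2\varepsilon$ jump in $\varphi$; once that bookkeeping is in place, the continuity of $\eta$ and the three claims reduce to classical existence and continuous-dependence results for ODEs.
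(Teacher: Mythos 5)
Your construction is correct, but it is not the paper's argument, and it in fact quietly repairs a genuine defect in it. The paper uses a single \emph{autonomous} field: with $B=\varphi^{-1}([c-\varepsilon,c-0.6\varepsilon])$, $C=\varphi^{-1}([c+0.6\varepsilon,c+\varepsilon])$ and $A=\varphi^{-1}([c-2\varepsilon,c+2\varepsilon])\setminus D$, it builds from distance functions a cut-off $\psi$ with $\psi=1$ on $B$, $\psi=-1$ on $C$, $\psi=0$ off $A$, flows along $\psi\,\|\nabla\varphi\|^{-2}\nabla\varphi$ for time $2\varepsilon$, and then verifies $(b')$ and $(c')$ only under the hypothesis ``$\sigma(t,u)\in B$ for all $t\in[0,2\varepsilon]$'' (resp.\ $C$). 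That hypothesis is unsatisfiable: remaining in $B$ forces $\varphi\big(\sigma(2\varepsilon,u)\big)\leq c-0.6\varepsilon$, while the same computation yields $\varphi\big(\sigma(2\varepsilon,u)\big)\geq c+\varepsilon$; worse, once a trajectory leaves $B$ it enters the intermediate region where the paper's $\psi$ is small and vanishes on the set where $\mathrm{dist}(\cdot,B)=\mathrm{dist}(\cdot,C)$, so the autonomous flow can stall there and the paper's $\eta$ need not satisfy $(b')$ at all (to gain $2\varepsilon$ in time $2\varepsilon$ one would need $\psi\equiv 1$ along the trajectory, i.e.\ the trajectory would have to stay in $B$, which is impossible). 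Your two changes --- freezing the direction through $f(\varphi(u))$, so the sign cannot flip as the trajectory crosses the middle band, and taking the cut-off equal to $1$ on the whole slab $\varphi^{-1}([c-\varepsilon,c+\varepsilon])$, with the longer time $8\varepsilon$ offsetting the pseudo-gradient rate $1/4$ and the monotonicity/exit alternative closing the estimate --- are exactly what makes $(b')$ and $(c')$ provable unconditionally. Using a pseudo-gradient instead of $\nabla\varphi$ also means your proof needs only $\varphi\in C^{1}$ on a Banach space, whereas the paper works with $C^{2}$ on a Hilbert space.

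One caveat concerns the quantifier on $D$. You read the statement existentially and exhibit the specific set $D=\varphi^{-1}([c-0.5\varepsilon,c+0.5\varepsilon])$; the paper instead treats $D$ as \emph{prescribed} data (in Section 4 it ``takes'' $D=\varphi^{-1}(c)$) and builds its field relative to the given $D$. Your $\eta$ does not fix an arbitrary prescribed $D\subseteq\varphi^{-1}([c-0.5\varepsilon,c+\varepsilon])$ pointwise --- but no map can: if $D$ meets $\varphi^{-1}([c+0.6\varepsilon,c+\varepsilon])$, then $(a')$ demands $\eta=\mathrm{id}$ there while $(c')$ demands $\varphi\circ\eta\leq c-\varepsilon$ there, and indeed the paper's own $\psi$ degenerates to $0/0$ at points of $D\cap C$. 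Since the only $D$ the paper actually uses is $\varphi^{-1}(c)$, which lies inside your $D$, your $\eta$ is the identity on it and serves the downstream application; so the existential reading is the only tenable one, and under it your proof is complete.
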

\begin{proof}
Let us define
\begin{eqnarray*}
&A&:=\varphi^{-1}\big([c-2\varepsilon, c+2\varepsilon]\big)\backslash D,\\
&B&:=\varphi^{-1}\big([c-\varepsilon, c-0.6\varepsilon]\big),\\
&C&:=\varphi^{-1}\big([c+0.6\varepsilon, c+\varepsilon]\big),\\
&\psi(u)&:=\frac{[dist(u, C)-dist(u, B)]dist(u, X\backslash
A)}{[dist(u, C)+dist(u, B)]dist(u, X\backslash A)+dist(u, B)dist(u,
C)},
\end{eqnarray*}
so that $\psi$ is locally Lipschitz continuous, $\psi=1$ on $B$,
$\psi=-1$ on $C$ and
$\psi=0$ on $X\backslash$ $A$.\\
Let us also define the locally Lipschitz continuous vector field
\begin{eqnarray*}
f(u)&:=&\psi(u)\|\nabla\varphi(u)\|^{-2}\nabla\varphi(u), \quad \ \  u\in A,\\
&:=&0, \quad \ \  u\in X\backslash A.
\end{eqnarray*}
It is clear that $\|f(u)\|\leq (2\varepsilon)^{-1}$ on $X$. For each
$u\in$ $X$, the Cauchy problem
\begin{eqnarray*}
\frac{d}{dt}\sigma(t, u)&=&f\big(\sigma(t, u)\big),\\
\sigma(0, u)&=&u,
\end{eqnarray*}
has a unique solution $\sigma(\cdot, u)$ defined on $\mathbb{R}$.
Moreover, $\sigma$ is continuous on $\mathbb{R}\times X$(see e.g.
\cite{10}). The map $\eta$ defined on $X$ by
$\eta(u):=\sigma(2\varepsilon, u)$ satisfies $(a\,^{\prime})$. Since
\begin{eqnarray}
\frac{d}{dt}\varphi\big(\sigma(t,
u)\big)&=&\bigg(\nabla\varphi\big(\sigma(t, u)\big),
\frac{d}{dt}\sigma(t, u)\bigg)\nonumber\\
&=&\bigg(\nabla\varphi\big(\sigma(t, u)\big), f\big(\sigma(t,
u)\big)\bigg)\nonumber\\
&=&\psi\big(\sigma(t, u)\big)\label{3.1},
\end{eqnarray}
 If
\begin{eqnarray*}
\sigma(t, u)\in\varphi^{-1}\big([c-\varepsilon,
c-0.6\varepsilon]\big)=B, \quad \ \ \forall t\in[0, 2\varepsilon],
\end{eqnarray*}
then \[ \psi(\sigma(t, u))=1.
\]
So, we obtain from \eqref{3.1},
\begin{eqnarray*}
\varphi\big(\sigma(2\varepsilon,
u)\big)&=&\varphi(u)+\int_{0}^{2\varepsilon}\frac{d}{dt}\varphi\big(\sigma(t,
u)\big)dt\\
&=&\varphi(u)+\int_{0}^{2\varepsilon}\psi\big(\sigma(t, u)\big)dt\\
&\geq&c-\varepsilon+2\varepsilon=c+\varepsilon,
\end{eqnarray*}
and $(b\,^{\prime})$ is also satisfied. \\
Finally, similar to prove $(b\,^{\prime})$, we prove $(c\,^{\prime})$.\\
If
\begin{eqnarray*}
\sigma(t, u)\in\varphi^{-1}\big([c+0.6\varepsilon,
c+\varepsilon]\big)=C, \quad \ \ \forall t\in[0, 2\varepsilon],
\end{eqnarray*}
then \[
 \psi(\sigma(t, u))=-1.
\]
So, we obtain from \eqref{3.1},
\begin{eqnarray*}
\varphi\big(\sigma(2\varepsilon,
u)\big)&=&\varphi(u)+\int_{0}^{2\varepsilon}\frac{d}{dt}\varphi\big(\sigma(t,
u)\big)dt\\
&=&\varphi(u)+\int_{0}^{2\varepsilon}\psi\big(\sigma(t, u)\big)dt\\
&\leq&c+\varepsilon-2\varepsilon=c-\varepsilon,
\end{eqnarray*}
and $(c\,^{\prime})$ is also satisfied.
\end{proof}
\begin{remark}\label{3.1}
By Theorem $\ref{3.1}$, we get more critical points than the
Quantitative Deformation Lemma in [11]. All the domain $D$
in Theorem $\ref{3.1}$, especially for $supinf$ critical value $c$,
$x=\varphi^{-1}(c)$ are all new critical points.
\end{remark}
\begin{remark}
In Lemma $2.1$, there are two conclusions, but in Theorem
$\ref{3.1}$, there are three conclusions.
\end{remark}
 \section{An Example (New Mountain Pass Theorem)}
   \setcounter{equation}{0} \noindent

 Let $X$ be a Hilbert space,
$\varphi\in C^{2}(X, \mathbb{R})$, $e\in X$ and $r>0$ be such that
$\|e\|>r$ and
\[
 \varphi(0)=c_1, \quad \ \ \varphi(e)=c_2, \quad \ \ c_1\neq c_2,
 \]
  and
\[
          c_1:=\sup_{\gamma\in\Gamma}\min_{t\in[0, 1]}\varphi\big(\gamma(t)\big)
   , \,\,\,\,c_2:=\inf_{\gamma\in\Gamma}\max_{t\in[0,
   1]}\varphi\big(\gamma(t)\big),
   \]
   where
    \[
     \Gamma:=\{\gamma\in C\big([0, 1], X\big):\gamma(\frac{1}{4})=0, \gamma(\frac{1}{2})=e\}
    .\]
 Then, for each $\varepsilon>0$, there exists $u^{\ast}\in X$ and $u^{\triangle}\in X$ such that
\begin{itemize}
\item[$(\mathrm{I})$] $c_1-2\varepsilon\leq\varphi(u^{\ast})\leq c_1+2\varepsilon$,
 \end{itemize}
 \begin{itemize}
 \item[$(\mathrm{II})$] $\|\varphi^{\prime}(u^{\ast})\|<2\varepsilon$.
   \end{itemize}
 \begin{itemize}
   \item[$(\mathrm{III})$] $c_2-2\varepsilon\leq\varphi(u^{\triangle})\leq c_2+2\varepsilon$,
 \end{itemize}
 \begin{itemize}
 \item[$(\mathrm{IV})$] $\|\varphi^{\prime}(u^{\triangle})\|<2\varepsilon$.
 \end{itemize}
\begin{proof}
Obviously, for each $\varepsilon>0$, (I) and (III) are easy to
get. Next, we prove (II)
and (IV).\\
Suppose that  at least one of (II) and (IV) is not true. Then, we
can get the contradiction:

Case 1. We assume that (II) is not true. It means that there exists
$\varepsilon$ such that
\[
 \|\varphi^{\prime}(u^{\ast})\|\geq2\varepsilon.
\]
From
\[
          c_1:=\sup_{\gamma\in\Gamma}\min_{t\in[0, 1]}\varphi\big(\gamma(t)\big)
   , \,\,\,\,c_2:=\inf_{\gamma\in\Gamma}\max_{t\in[0,
   1]}\varphi\big(\gamma(t)\big),
   \]
   and $c_1\neq c_2$,
 we get $c_1< c_2$ or $c_1> c_2$.
Then, Case 1 can be divided into two parts.

 Firstly, when  $c_1< c_2$, let
$\varepsilon_{1}=\min\{\frac{c_2-c_1}{4},\,\varepsilon\}$. It is
clear that
\[
 \|\varphi^{\prime}(u^{\ast})\|\geq2\varepsilon_{1}.
\]
and for $\varepsilon_{1}$, (I) is still easy to get.

 From $\varepsilon_{1}=\min\{\frac{c_2-c_1}{4},\,\varepsilon\}$,
we obtain
\[
c_1+2\varepsilon_{1}\leq c_1+2\times\frac{c_2-c_1}{4}
=c_1+\frac{c_2-c_1}{2}=\frac{c_2}{2}+\frac{c_1}{2}<c_2.
\]
It means that
\[
c_2>c_1+2\varepsilon_{1}.
\]
 In Theorem $\ref{3.1}$, we
 can take $D = \{u\in X\mid\varphi(u)=c_1\}$. Consider
$\beta=\eta\circ\gamma$, where $\eta$ is given by Theorem
 $\ref{3.1}$. Using $(a\,^{\prime})$, we have,
 \begin{eqnarray*}
\beta(\frac{1}{4})&=&\eta\big(\gamma(\frac{1}{4})\big)=\eta(0)=0.\\
\beta(\frac{1}{2})&=&\eta\big(\gamma(\frac{1}{2})\big)=\eta(e)=e,
     \end{eqnarray*}
so that $\beta\in\Gamma$.
  From
   \[
             c_1:=\sup_{\gamma\in\Gamma}\min_{t\in[0,
             1]}\varphi\big(\gamma(t)\big),
      \]
   there exist $\gamma\in \Gamma$ and $\varepsilon_{2}>0$ such that
  \[
   c_1-\varepsilon_{2}\leq\min_{t\in[0, 1]}\varphi\big(\gamma(t)\big)\leq c_1-0.6\varepsilon_{2}.
   \]
   Then, from $(b\,^{\prime})$, we have
  \[
    \min_{t\in[0, 1]}\varphi\bigg(\eta\big(\gamma(t)\big)\bigg)\geq
    c_1+\varepsilon_{2}.
  \]
  It means that
   \[
       \min_{t\in [0, 1]}\varphi\big(\beta(t)\big)\geq
    c_1+\varepsilon_{2}.
     \]
    So
\[
   c_1+\varepsilon_{2}\leq\min_{t\in [0, 1]}\varphi\big(\beta(t)\big)\leq c_1.
 \]
 This is a contradiction.
 Therefore, (II) is true.

Secondly, when  $c_1> c_2$, let
$\varepsilon_{1}=\min\{\frac{c_1-c_2}{4},\,\varepsilon\}$. It is
clear that
\[
 \|\varphi^{\prime}(u^{\ast})\|\geq2\varepsilon_{1},
\]
and for $\varepsilon_{1}$, (I) is still easy to get.

 From $\varepsilon_{1}=\min\{\frac{c_1-c_2}{4},\,\varepsilon\}$,
we obtain
\[
c_1-2\varepsilon_{1}\geq c_1-2\times\frac{c_1-c_2}{4}
=\frac{c_1}{2}+\frac{c_2}{2}>c_2.
\]
It means that
\[
c_2<c_1-2\varepsilon_{1}.
\]
 In Theorem $\ref{3.1}$, we
 can take $D = \{u\in X\mid\varphi(u)=c_1\}$. Consider
$\beta=\eta\circ\gamma$, where $\eta$ is given by Theorem
 $\ref{3.1}$. Using $(a\,^{\prime})$, we have,
 \begin{eqnarray*}
\beta(\frac{1}{4})&=&\eta\big(\gamma(\frac{1}{4})\big)=\eta(0)=0.\\
\beta(\frac{1}{2})&=&\eta\big(\gamma(\frac{1}{2})\big)=\eta(e)=e,
     \end{eqnarray*}
so that $\beta\in\Gamma$.
  From
   \[
             c_1:=\sup_{\gamma\in\Gamma}\min_{t\in[0,
             1]}\varphi\big(\gamma(t)\big),
      \]
   there exist $\gamma\in \Gamma$ and $\varepsilon_{2}>0$ such that
  \[
   c_1-\varepsilon_{2}\leq\min_{t\in[0, 1]}\varphi\big(\gamma(t)\big)\leq c_1-0.6\varepsilon_{2}.
   \]
   Then, from $(b\,^{\prime})$, we have
  \[
    \min_{t\in[0, 1]}\varphi\bigg(\eta\big(\gamma(t)\big)\bigg)\geq
    c_1+\varepsilon_{2}.
  \]
  It means that
   \[
       \min_{t\in [0, 1]}\varphi\big(\beta(t)\big)\geq
    c_1+\varepsilon_{2}.
     \]
    So
\[
   c_1+\varepsilon_{2}\leq\min_{t\in [0, 1]}\varphi\big(\beta(t)\big)\leq c_1.
 \]
 This is a contradiction.
 Therefore, (II) is true.

 Case 2. We assume that (IV) is not true. It means that there exists
$\varepsilon$ such that
\[
\|\varphi^{\prime}(u^{\triangle})\|\geq2\varepsilon.
\]
From
\[
          c_1:=\sup_{\gamma\in\Gamma}\min_{t\in[0, 1]}\varphi\big(\gamma(t)\big)
   , \,\,\,\,c_2:=\inf_{\gamma\in\Gamma}\max_{t\in[0,
   1]}\varphi\big(\gamma(t)\big),
   \]
   and $c_1\neq c_2$
 we get $c_1< c_2$ or $c_1> c_2$. Then, Case 2 can be divided into two parts.

Firstly, when  $c_1< c_2$, let
$\varepsilon_{1}=\min\{\frac{c_2-c_1}{4},\,\varepsilon\}$. It is
clear that
\[
 \|\varphi^{\prime}(u^{\triangle})\|\geq2\varepsilon_{1}.
\]
and for $\varepsilon_{1}$, (III) is still easy to get.

 From $\varepsilon_{1}=\min\{\frac{c_2-c_1}{4},\,\varepsilon\}$,
we obtain
\[
c_2-2\varepsilon_{1}\geq c_2-2\times\frac{c_2-c_1}{4}
=c_2-\frac{c_2-c_1}{2}=\frac{c_2}{2}+\frac{c_1}{2}>c_1.
\]
It means that
\[
c_1<c_2-2\varepsilon_{1}.
\]

 In Theorem $\ref{3.1}$, we
 can take $D = \{u\in X\mid\varphi(u)=c_2\}$. Consider
$\beta=\eta\circ\gamma$, where $\eta$ is given by Theorem
 $\ref{3.1}$. Using $(a\,^{\prime})$, we have,
 \begin{eqnarray*}
\beta(\frac{1}{4})&=&\eta\big(\gamma(\frac{1}{4})\big)=\eta(0)=0.\\
\beta(\frac{1}{2})&=&\eta\big(\gamma(\frac{1}{2})\big)=\eta(e)=e,
\end{eqnarray*}
so that $\beta\in\Gamma$.
    From
   \[
             c_2:=\inf_{\gamma\in\Gamma}\max_{t\in[0,
             1]}\varphi\big(\gamma(t)\big),
      \]
   there exist $\gamma\in \Gamma$ and $\varepsilon_{3}>0$ such that
  \[
   c_2+0.6\varepsilon_{3}\leq\max_{t\in[0, 1]}\varphi\big(\gamma(t)\big)\leq c_2+\varepsilon_{3}.
   \]
   Then, from $(c_1\,^{\prime})$, we have
  \[
    \max_{t\in[0, 1]}\varphi\bigg(\eta\big(\gamma(t)\big)\bigg)\leq
    c_2-\varepsilon_{3}.
  \]
  It means that
   \[
       \max_{t\in [0, 1]}\varphi\big(\beta(t)\big)\leq
       c_2-\varepsilon_{3}.
     \]
    So
\[
   c_2\leq\max_{t\in [0, 1]}\varphi\big(\beta(t)\big)\leq
   c_2-\varepsilon_{3}.
 \]
 This is a contradiction. Therefore, (IV) is true.

Secondly, when  $c_2< c_1$, let
$\varepsilon_{1}=\min\{\frac{c_1-c_2}{4},\,\varepsilon\}$ and take
$D = \{u\in X\mid\varphi(u)=c_2\}$, the rest of the proof is similar
to the first part of Case 2. Therefore, (IV) is true.

From Case 1 and Case 2, our new mountain pass theorem is proved.

\end{proof}

\begin{remark}\label{4.1}
 In Theorem 2.1 (Mountain pass theorem), $c$ is defined as
\[
c:=\inf_{\gamma\in\Gamma}\max_{t\in[0, 1]}\varphi\big(\gamma(t)\big)
\]
where
\[
 \Gamma:=\{\gamma\in C\big([0, 1], X\big):\gamma(0)=0,
          \gamma(1)=e\}.
\]
But in our new mountain pass theorem, $c_1$ and $c_2$ are defined as
\[
c_1:=\sup_{\gamma\in\Gamma}\min_{t\in[0,
1]}\varphi\big(\gamma(t)\big),\quad \ \
c_2:=\inf_{\gamma\in\Gamma}\max_{t\in[0,
1]}\varphi\big(\gamma(t)\big),
\]
where
\[
\Gamma:=\{\gamma\in C\big([0, 1], X\big):\gamma(\frac{1}{4})=0,
          \gamma(\frac{1}{2})=e\}.
\]
\end{remark}
\begin{remark}\label{4.3}
 In fact, in Theorem 2.1 (Mountain pass theorem),
 \[
  c_2>\varphi(0)\geq\varphi(e).
  \]
  But in our new mountain pass theorem,
\[
 \varphi(0)=c_1, \quad \ \ \varphi(e)=c_2, \quad \ \ c_1\neq c_2.
 \]
 and in the proof of our new mountain pass theorem, we take
$D = \{u\in X\mid\varphi(u)=c_1\}$ in Case 1, and take $D = \{u\in
X\mid\varphi(u)=c_2\}$ in Case 2.
\end{remark}
\begin{remark}\label{4.4}
In the example, if we do not use our Theorem $\ref{3.1}$ (New
quantitative deformation lemma), we can not obtain
\[
\beta(\frac{1}{4})=\eta\big(\gamma(\frac{1}{4})\big)=\eta(0)=0.
\]
Moreover, we can not obtain $\beta\in\Gamma$.
\end{remark}

\begin{remark}\label{4.5}
An interesting point in the example is that, if $\varphi$ satisfies
$(PS)_{c}$ condition, it is easy to obtain two new critical points
$x = 0$ and $x = e$ which have not been obtained before.
\end{remark}

\end{document}